\numberwithin{equation}{section} \setlength{\oddsidemargin}{.0001in}
\newcommand{\bean}{\begin{eqnarray*}}
\newcommand{\eean}{\end{eqnarray*}}
\newcommand{\be}{\begin{equation}}
\newcommand{\ee}{\end{equation}}
\newcommand{\bd}{\begin{displaymath}}
\newcommand{\ed}{\end{displaymath}}
\newcommand{\beq}{\begin{equation}}
\newcommand{\eeq}{\end{equation}}
\newcommand{\bea}{\begin{eqnarray}}
\newcommand{\eea}{\end{eqnarray}}
\newcommand{\abs}[1]{\left\vert{#1}\right\vert}
\newcommand{\proj}{\mathrm{Proj}}
\newcommand{\R}{\mathbb{R}}
\newcommand{\e}{\varepsilon}
\newcommand{\norm}[1]{\left\Vert#1\right\Vert}
\DeclareMathOperator{\dist}{dist}
\DeclareMathOperator{\sing}{Sing}
\newtheorem{thm}{Theorem}
\newtheorem{prop}[thm]{Proposition}
\newtheorem{lem}[thm]{Lemma}
\theoremstyle{definition}
\newtheorem{rem}[thm]{Remark}
\def\({\left(}
\def\){\right)}
\def\1{\mathds{1}}
\def\l|{\left|}
\def\p{\partial}
\def\r|{\right|}
\def\XXint#1#2#3{{\setbox0=\hbox{$#1{#2#3}{\int}$ }
\vcenter{\hbox{$#2#3$ }}\kern-.6\wd0}}
\begin{document}

\begin{abstract}
The study of singular perturbations of the Dirichlet energy is at the core of the phenomenological-description paradigm in soft condensed matter. Being able to pass to the limit plays a crucial role in the understanding of the geometric-driven profile of ground states. In this work we study, under very general assumptions, the convergence of minimizers  towards harmonic maps. We show that the convergence is locally uniform up to the boundary, away from the lower dimensional singular set. Our results generalize related findings, most notably in the theory of liquid-crystals, to all dimensions $n\geq 3$, and to general nonlinearities. Our proof follows a well-known scheme, relying on  small energy estimate and monotonicity formula. It departs substantially from previous studies in the treatment of the small energy estimate at the boundary, since we do not rely on the specific form of the potential. In particular this extends existing results in 3-dimensional settings. In higher dimensions we also deal with additional difficulties concerning the boundary monotonicity formula.

\smallskip
\noindent \textbf{Keywords.} Ginzburg-Landau energy, Landau-de Gennes energy, Asymptotic behavior of minimizers.
\end{abstract}

\title{On the convergence of minimizers of singular perturbation functionals}
\date{}
\author{Andres Contreras \and Xavier Lamy \and R{\'e}my Rodiac}

\address[A.Contreras]{Department of Mathematical Sciences, New Mexico State University, Las Cruces,
New Mexico, USA}
\email{acontre@nmsu.edu}

\address[X.Lamy]{Max Planck Institute for Mathematics in the Sciences, Leipzig, Germany
}
\email{xlamy@mis.mpg.de}
\address[R.Rodiac]{Facultad de Matematic\'as, Pontificia Universidad Cat\'olica de Chile, Vicu\~na Mackenna 4860, Macul, Santiago Chile}
\email{remy.rodiac@mat.uc.cl}

\maketitle

\section{Introduction}\label{s:intro}

In this article, our main interest is the asymptotic behavior of minimizers $(u_\e)$ of the Ginzburg-Landau type energy functionals

%

\begin{equation}\label{E}
E_\e(u)=\frac 12 \int_\Omega\abs{\nabla u}^2 +\frac 1 {\e^2} \int_\Omega f(u),\qquad u\in H^1(\Omega;\R^k),
\end{equation}
subject to fixed boundary conditions
\begin{equation}\label{ub}
u_{\lfloor\partial\Omega}=u_b\in C^2(\partial\Omega;\mathcal N),
\end{equation}
where $\Omega\subseteq \R^n,$ $n\geq 3$ and $f:\R^k\to [0,\infty)$ is a smooth potential such that its vacuum

 \begin{equation}\label{N}
\mathcal N:=\lbrace f=0\rbrace ,\quad\text{ is a smooth compact submanifold of }\R^k.
\end{equation}

%
{
The functional \eqref{E} can be seen as a relaxation of the Dirichlet energy $\int\abs{\nabla u}^2$ for $\mathcal N$-valued maps.
Energy functionals of the form \eqref{E} are} very common in the theory of phase transitions and instances of it are the Allen-Cahn functional, the Ginzburg-Landau energy and the Landau-de Gennes model, to name a few.

Our goal is to establish a stronger compactness of $(u_\e)$ than the one readily available by classical soft arguments (see \eqref{soft.comp} below); we show the existence of a subsequential $H^1$-limit, a harmonic map satisfying \eqref{ub}, such that the convergence is actually uniform away from the singular set of the limiting $\mathcal{N}$-valued map. Our main theorem proves that this is a robust phenomenon that does not depend strongly on the particular potential $f,$ in fact the only assumptions we make are

\begin{itemize}
\item[(f1)]
 There exists $R>0$ satisfying
\begin{equation}\label{radialgrowthpotential}
\abs{z}{\geq} R \quad\Longrightarrow \nabla f(z)\cdot z \geq 0.
\end{equation}

\item[(f2)]
Minimizers $u=u_\e$ of $E_\e$ solve\footnote{ Under rather natural growth conditions on the potential $f ,$  hypothesis $(f_2)$ is satisfied and it is therefore not a restrictive requirement (see \textit{e.g.} \cite{dacorogna}).} the semilinear elliptic system
\begin{equation}\label{pde}
\Delta u =\frac{1}{\e^2}\nabla f(u)\quad\text{in }\mathcal D'(\Omega).
\end{equation}
\item[(f3)] Generic assumption: $f$ vanishes non-degenerately on $\mathcal N$, that is
\begin{equation}\label{nondegen}
\nabla^2 f(x) v\cdot v > 0\quad \text{for } x\in \mathcal N\text{ and }v\in (T_x\mathcal N)^\perp\setminus \lbrace 0\rbrace.
\end{equation}
\end{itemize}
{Here $T_x\mathcal N$ denotes the tangent space to $\mathcal N$ at $x$ and $(T_x\mathcal N)^\perp$ its orthogonal complement in $\R^k$.}

\begin{rem}\label{rem:usmooth}
The assumption \eqref{radialgrowthpotential} on $f$ ensures that distributional solutions of \eqref{pde} that belong to $H^1$ satisfy a uniform bound \cite[Lemma~8.3]{lamy14}
\begin{equation}\label{unifbound}
\norm{u}_{L^\infty}\leq R + \norm{u_b}_{L^\infty}.
\end{equation}
Therefore by  elliptic regularity such $u$ is smooth.
\end{rem}
{
\begin{rem}
Relevant examples of potentials satisfying (f1)--(f3) include the Ginzburg-Landau potentials $f: \R^k \rightarrow \R, z\mapsto (1-|z|^2)^2$ with $\mathcal{N}=\mathbb{S}^{k-1}$, and the Landau-de Gennes potential (see discussion after the statement of Theorem~\ref{thm:main}).
\end{rem}
}
As $\e\to 0$, any minimizing family $(u_\e)$
admits a subsequence converging strongly in $H^1$ to a map

\begin{equation}\label{soft.comp}
u_\star\in H^1(\Omega;\mathcal N),
\end{equation}
which minimizes the Dirichlet energy $\int\abs{\nabla u}^2$ among $\mathcal N$-valued maps, subject to the boundary conditions \eqref{ub}. This can be checked as in \cite[Lemma~3]{majumdarzarnescu10}. It is well known that $u_\star$ is not smooth in general, so that we cannot expect the convergence of $u_\e$ towards $u_\star$ to be uniform in $\Omega$. On the other hand such uniform convergence might be expected away from the singular set $\sing (u_\star)$, which is a compact subset of $\Omega$ of Hausdorff dimension at most $(n-3)$ \cite{SUregularity,SUboundaryregularity}. Our main result states that this is indeed the case:
\begin{thm}\label{thm:main}
Assume \emph{(f1)--(f3)} hold. If a subsequence of minimizers $(u_\e)$ of $E_\e$ subject to \eqref{ub} converges strongly in $H^1$, then it holds in fact that
\begin{equation*}
u_\e\longrightarrow u_\star\quad\text{locally uniformly in }\overline\Omega\setminus \sing(u_\star).
\end{equation*}
\end{thm}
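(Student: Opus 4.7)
The plan is to execute the classical $\varepsilon$-regularity plus monotonicity scheme for singular perturbations of the Dirichlet energy. Two main ingredients are needed: (a) a \emph{small-energy estimate} yielding uniform convergence $u_\e \to u_\star$ in a ball whenever the normalized local energy is below a threshold, valid both in the interior and up to the boundary; and (b) a \emph{monotonicity formula} for the scaled energy density $r^{2-n} E_\e(u_\e, B_r(x_0) \cap \Omega)$, in interior and boundary versions. Once these are in hand, the singular behavior is localized on a concentration set, which can be identified with $\mathrm{Sing}(u_\star)$, and uniform convergence on compact subsets of $\overline\Omega \setminus \mathrm{Sing}(u_\star)$ follows by a covering argument.

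First, I would establish the $\varepsilon$-regularity: there exist $\eta_0, \e_0, r_0 > 0$ such that whenever $x_0 \in \overline\Omega$, $r \leq r_0$, $\e \leq \e_0 r$, and
$$r^{2-n} E_\e(u_\e, B_r(x_0) \cap \Omega) \leq \eta_0,$$
one has $\|u_\e - u_\star\|_{L^\infty(B_{r/2}(x_0) \cap \overline\Omega)} \to 0$ as $\e \to 0$. The interior case is classical (Chen--Struwe / Bethuel--Brezis--Coron type): combine the $L^\infty$ bound from \eqref{unifbound}, a Bochner inequality for $|\nabla u_\e|^2$ derived from \eqref{pde}, and the non-degeneracy (f3) — which is the key quantitative link bounding $f(u_\e)$ in terms of $\mathrm{dist}(u_\e, \mathcal N)^2$ near $\mathcal N$ — together with a Moser-type iteration. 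For the boundary version, which is the main novelty, I would locally flatten $\partial\Omega$, construct an extension of $u_\e$ across the boundary compatible with the $C^2$ Dirichlet data $u_b$, and adapt the interior argument to the extended system; the extension must be driven only by (f1)--(f3), with no recourse to explicit structure of $f$. Then I would derive monotonicity of $r \mapsto r^{2-n} E_\e(u_\e, B_r(x_0) \cap \Omega)$ up to lower-order corrections by testing \eqref{pde} against $x \cdot \nabla u_\e$; the boundary version in $n \geq 4$ requires careful estimation of extra terms arising from $u_b$ and from the geometry of $\partial\Omega$.

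Finally, define the concentration set
$$\Sigma_\star := \left\{ x \in \overline\Omega \, : \, \liminf_{r \to 0} \liminf_{\e \to 0} r^{2-n} E_\e(u_\e, B_r(x) \cap \Omega) > \eta_0/2 \right\}.$$
Using the $H^1$ convergence of the subsequence and the monotonicity formula applied in the limit to $u_\star$, show that $\Sigma_\star$ coincides with $\mathrm{Sing}(u_\star)$. Any point in $\overline\Omega \setminus \mathrm{Sing}(u_\star)$ then admits a neighborhood in which the small-energy condition holds for $\e$ sufficiently small along the subsequence, so the $\varepsilon$-regularity gives uniform convergence there, and a covering argument over compact subsets of $\overline\Omega \setminus \mathrm{Sing}(u_\star)$ concludes. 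The hard part will be the boundary $\varepsilon$-regularity under only (f1)--(f3): previous 3D approaches exploited specific features of $f$ (e.g.\ dual formulations for Landau--de Gennes), whereas here one must engineer a purely PDE-based argument leveraging (f3) as the sole mechanism controlling $\mathrm{dist}(u_\e, \mathcal N)$, while simultaneously grappling with the higher-dimensional boundary monotonicity terms.
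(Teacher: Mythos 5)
Your overall scheme (monotonicity formula plus small-energy estimate plus covering of a compact subset of $\overline\Omega\setminus\sing(u_\star)$) is the same as the paper's, and your final step — transferring smallness of the scaled energy from $u_\star$ to $u_\e$ via strong $H^1$ convergence and $\e^{-2}\int_\Omega f(u_\e)\to 0$, then concluding by $\varepsilon$-regularity — is essentially the paper's Section~\ref{s:proofmain} (your two-sided identification of a concentration set with $\sing(u_\star)$ is more than is needed; only the inclusion ``regular points are non-concentration points'' is used). However, the proposal has genuine gaps at precisely the two points that constitute the paper's actual content. First, the boundary small-energy estimate: you propose to flatten $\partial\Omega$ and extend $u_\e$ across the boundary ``compatibly with $u_b$'' and then run the interior Bochner/Moser argument on the extended map. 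You give no mechanism for this, and it is not clear it can work: the extended map does not solve \eqref{pde}, the term $\e^{-2}f(u)$ has no reflection symmetry for a general potential, and the Bochner inequality \eqref{bochner} is only available for genuine solutions. The paper avoids any extension: it estimates $\abs{\nabla u_\e}$ directly on $\partial\Omega$ (Lemma~\ref{lem:bdryestim}) by splitting $u=\pi_{\mathcal N}(u)+u^\perp$, deriving $\abs{\Delta u^{\mathcal N}}\le Ce_\e(u)$ and the differential inequality \eqref{Deltauperp} for $\abs{u^\perp}=\dist(u,\mathcal N)$ using only the nondegeneracy \eqref{nondegen}, and then using a maximum-principle barrier (the point $u^\perp=0$ on $\partial\Omega$) plus elliptic estimates; afterwards the Bochner inequality is exploited through the truncation $\widetilde v=\max(v-1/2,0)$ extended by zero outside $\widetilde\Omega$ and Harnack's inequality. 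Declaring the extension step ``the hard part'' does not supply the missing argument, and as proposed it is the step most likely to fail.

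Second, the boundary monotonicity for $n\ge 4$: you assert monotonicity of $r^{2-n}E_\e(u_\e,B_r\cap\Omega)$ ``up to lower-order corrections.'' This is exactly what is not known in general dimension. Testing the stress-energy tensor against $x\cdot\nabla u_\e$ leaves the boundary term $\rho^{3-n}\int_{\partial\Omega\cap B_\rho}\abs{\partial u/\partial\nu}^2$ in \eqref{monot2}, which for $n\ge 4$ is \emph{not} lower order and cannot be absorbed as in the $n=3$ argument of Majumdar--Zarnescu. The paper's way out is to control this term by testing \eqref{statio} against $\chi_\rho X$ with $X$ a normal extension field, which only yields the weak quasi-monotonicity $\psi'(\rho)\ge K(1-\psi(2\rho))$ of Proposition~\ref{prop:monot}; one then needs the separate bootstrap of Lemma~\ref{lem:quasimonot}, requiring smallness on a whole interval of scales $[\rho_0,2\rho_0]$, to propagate smallness downward. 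Relatedly, your $\varepsilon$-regularity is stated with smallness at a single scale $r$, whereas the usable statement (Proposition~\ref{prop:smallestim}) requires smallness at all scales inside $B_{2r}$ — the gap between the two is exactly what the (missing) boundary monotonicity must bridge. So while the architecture is right, both load-bearing lemmas are asserted rather than proved, and the specific routes you indicate for them (reflection/extension; clean boundary monotonicity) would need to be replaced by arguments of the type the paper develops.
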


One of the main motivations for studying this problem comes from questions arising in the Landau-de Gennes theory of liquid crystals, where $n=3$ and $f$ is a particular potential defined on the space of symmetric and traceless $3\times 3$ matrices. The simplified Landau-de Gennes functional is given by
\[\mathcal{F}_{LG}[Q]=\int_{\Omega}\frac{L}{2}\abs{\nabla Q}^2(x)+f_B (Q(x))dx ,\]
where $L$ is the so-called {\it elastic constant} and the transition term is $\abs{\nabla Q}^2=\sum_{i,j,k=1}^3 Q_{ij,k}Q_{ij,k}.$ The potential takes the explicit form
\begin{equation*}
f_B(Q)=\frac{\alpha^2(T-T^{*})}{2}tr(Q^2)-\frac{b^2}{3}tr(Q^3)+\frac{c^2}{4}(tr Q^2)^2,
\end{equation*}
and is the simplest example of a multi-well potential.   In this way, the Landau-de Gennes energy corresponds to $E_\e$ for the particular choice of potential $f_B$, in the vanishing elastic constant regime $L\sim 0$.
{It can be checked that for $T\leq T^*$ the potential $f_B$ satisfies {(f1)--(f3)} for $\mathcal{N}=\{s_*(n\otimes n -\frac{1}{3}I)\colon n \in \mathbb{S}^2\}$ and some $s_*>0$.}

In the case of the Landau-de Gennes energy, Theorem~\ref{thm:main} has been proved by Majumdar and Zarnescu for the interior convergence \cite{majumdarzarnescu10} and by Nguyen and Zarnescu for the convergence up to the boundary \cite{nguyenzarnescu13} {(see also \cite{Canev} for sequences of unbounded energy)}. These works were building on methods developed for the Ginzburg-Landau energy \cite{BBH0,BBH1}. However, the Ginzburg-Landau and Landau-de Gennes models are only two in a family of increasingly refined and complex physical theories. It is then natural to ask to what extent this uniform convergence depends on the particular model and how sensitive it is to the potential at hand. In this respect our objective is to develop an approach that could potentially encompass all such models.

  Our contribution generalizes the results in \cite{majumdarzarnescu10, nguyenzarnescu13} to general potentials and arbitrary dimension $n\geq 3$. For the interior convergence the techniques adapt  without great difficulties. Regarding the boundary convergence however, the arguments in \cite{nguyenzarnescu13} are really specific to $n=3$ and the particular form of $f$. Let us be more specific and describe the general strategy of the proof. It relies on two main ingredients:
\begin{itemize}
\item a small energy estimate which states that, in a ball where the (appropriately rescaled) energy is small enough at all small scales, $\nabla u_\e$ is uniformly bounded;
\item and a monotonicity formula which allows to show that the energy is small at all small scales, provided it is small at one fixed scale.
\end{itemize}
The boundary monotonicity formula in \cite{majumdarzarnescu10} is derived under the assumption that $n=3$, and it is not clear whether such formula holds for general $n\geq 3$. Here we obtain a weaker version of it, which turns out to be enough for our purposes. On the other hand, the proof of the small energy estimate in \cite{nguyenzarnescu13} relies quite strongly on the particular structure of the potential. We provide a simpler proof that uses only the assumption of nondegeneracy \eqref{nondegen}. As in \cite{nguyenzarnescu13} the main ingredient is a Bochner-type identity, an elliptic equation satisfied by $\abs{\nabla u}^2$. To make use of it, one first needs some estimates on $\nabla u$ at the boundary, and we remark here that they can be obtained quite directly by computations similar to those in \cite{chenlin93}.

 In connection with the physical motivation of the problem, it would be interesting to replace the Dirichlet boundary conditions by the so-called weak anchoring conditions, which are enforced by adding an anchoring term to the energy functional. Such boundary conditions are more physically relevant, for instance in the study of nematic colloids \cite{alamabronsardlamy,alamabronsardlamy16}. In the case of weak anchoring, the limit $u_\star$ also enjoys some partial regularity \cite{contreraslamyrodiac15}. However the strategy detailed above for obtaining uniform convergence near the boundary seems much harder to implement, since it is not clear whether an equivalent of Lemma~\ref{lem:bdryestim} below would hold. In \cite{futurework} we use different methods to tackle this problem.

The article is organized as follows. In Section~\ref{s:monot} we prove the boundary monotonicity formula, in Section~\ref{s:estim} we prove the small energy estimate and we conclude in Section~\ref{s:proofmain} with the proof of Theorem~\ref{thm:main}.

\section{Monotonicity formula}\label{s:monot}

In this section and in the rest of the article we denote by $e_\e(u)$ the energy density
\begin{equation*}
e_\e(u) = \frac 12 \abs{\nabla u}^2 +\frac{1}{\e^2}f(u),
\end{equation*}
and prove the following boundary monotonicity formula:
\begin{prop}\label{prop:monot}
There exists a constant $K\geq 0$ depending only on $\Omega$ and $u_b$, such that for all $x_0\in\overline\Omega$ and any $\e\in (0,1)$ the function
\begin{equation*}
\psi(\rho):=2K\rho + \rho^{2-n}\int_{\Omega\cap B_\rho(x_0)} e_\e(u_\e),
\end{equation*}
satisfies
\begin{equation}\label{monot}
\frac{d}{d\rho}\psi(\rho)\geq K(1-\psi(2\rho)),
\end{equation}
for all $\rho\in (0,1)$.
\end{prop}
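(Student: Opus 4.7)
The plan is to follow the classical Pohozaev/monotonicity route, with a test vector field on $\overline\Omega$ chosen to respect the boundary. Set $h(\rho) := \int_{\Omega\cap B_\rho(x_0)} e_\e(u_\e)\,dx$; a direct computation gives
\begin{equation*}
\frac{d}{d\rho}\bigl[\rho^{2-n}h(\rho)\bigr] = \rho^{2-n}\!\!\int_{\Omega\cap\partial B_\rho(x_0)}\!\! e_\e\,dS - (n-2)\rho^{1-n}h(\rho),
\end{equation*}
so the task is to bound this derivative below using the PDE \eqref{pde}. I would multiply \eqref{pde} by $X\cdot\nabla u_\e$ and integrate by parts on $\Omega\cap B_\rho(x_0)$, choosing $X = X_{x_0}(x) = (x-x_0) + Y(x)$ with $Y$ a $C^1$ correction supported in a tubular neighborhood of $\partial\Omega$ arranged so that $X\cdot\nu_{\partial\Omega} = 0$ on $\partial\Omega$; the $C^2$ regularity of $\partial\Omega$ makes such a $Y$ available with $\|Y\|_{L^\infty}+\|\nabla Y\|_{L^\infty}\leq C(\Omega)$. (Equivalently, one can flatten $\partial\Omega$ locally and use $X(y)=y$ on $\{y_n>0\}$, which is tangent to $\{y_n=0\}$ automatically; the errors then come from the metric distortion.)

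The resulting identity reads, schematically,
\begin{equation*}
(n-2)h(\rho) + \frac{2}{\e^2}\!\!\int_{B_\rho\cap\Omega}\!\!f(u_\e) = \rho\!\!\int_{\partial B_\rho\cap\Omega}\!\!\bigl[e_\e-|\partial_\nu u_\e|^2\bigr]dS - \!\!\int_{\partial\Omega\cap B_\rho}\!\partial_\nu u_\e\cdot(X\cdot\nabla_\tau u_b)\,dS + \mathcal R(\rho),
\end{equation*}
where $\mathcal R$ is a bulk remainder coming from $\nabla Y$ and $\dv Y$ and is bounded by a constant times $h(\rho)$. Crucially, the tangency of $X$ on $\partial\Omega$ has killed the otherwise dangerous $(X\cdot\nu)|\partial_\nu u_\e|^2$-term, and the relation $u_\e=u_b$ on $\partial\Omega$ reduces the remaining boundary integrand to $\partial_\nu u_\e\cdot(X\cdot\nabla_\tau u_b)$, of size $\lesssim \rho\|u_b\|_{C^1}|\partial_\nu u_\e|$. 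Dividing by $\rho^{n-1}$, discarding the two nonnegative terms $\rho^{2-n}\int_{\partial B_\rho}|\partial_\nu u_\e|^2\,dS$ and $2\rho^{1-n}\e^{-2}\int f(u_\e)$, and combining Cauchy--Schwarz with a boundary trace estimate relating $\int_{\partial\Omega\cap B_\rho}|\partial_\nu u_\e|\,dS$ to the bulk energy on $B_{2\rho}\cap\Omega$, I expect to reach
\begin{equation*}
\frac{d}{d\rho}\bigl[\rho^{2-n}h(\rho)\bigr] \geq -K - K(2\rho)^{2-n}h(2\rho),
\end{equation*}
with $K$ depending only on $\Omega$ and $u_b$. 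Adding $2K$ to both sides is exactly \eqref{monot} for $\psi(\rho)=2K\rho+\rho^{2-n}h(\rho)$.

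The hard part is the boundary trace estimate for $\partial_\nu u_\e$: since this quantity is not a priori controlled, the tangency trick only eliminates the leading-order term $(X\cdot\nu)|\partial_\nu u_\e|^2$ while a linear-in-$|\partial_\nu u_\e|$ remnant persists. Absorbing it via Young's inequality splits it into a purely geometric constant (contributing $-K$) and a scale-invariant fraction of the energy at scale $2\rho$ (contributing $-K\psi(2\rho)$), which is precisely why the formula must be stated in the weakened form $\psi'\geq K(1-\psi(2\rho))$ rather than as the classical pointwise $\psi'\geq 0$. This is also where the argument departs from \cite{majumdarzarnescu10,nguyenzarnescu13}, whose proofs in the $n=3$ Landau--de Gennes setting exploited specific features of the potential to obtain a cleaner formula; here we retain only the weak form, which nonetheless suffices for the subsequent compactness arguments.
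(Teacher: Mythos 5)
Your outline follows the same general Pohozaev route as the paper up to a point, but the crucial step is missing. After the tangency trick and Young's inequality you are left with exactly the term the paper faces in \eqref{monot2}, namely $\rho^{3-n}\int_{\partial\Omega\cap B_\rho}\abs{\partial u/\partial\nu}^2$ (your linear-in-$\abs{\partial_\nu u_\e}$ remnant becomes this after absorption). You dispose of it by invoking ``a boundary trace estimate relating $\int_{\partial\Omega\cap B_\rho}\abs{\partial_\nu u_\e}$ to the bulk energy on $B_{2\rho}\cap\Omega$'', but no such standard trace estimate exists: the trace of $\nabla u_\e$ on $\partial\Omega$ is not controlled by the bulk Dirichlet energy for general $H^1$ (or even smooth) functions, and any estimate of this kind must use the equation again. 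This is precisely the heart of the matter for $n\geq 3$ and the point where the paper's proof departs from \cite{majumdarzarnescu10}: one tests the divergence-free stress--energy tensor \eqref{statio} a second time, against $\chi_\rho X$ where $X$ is a smooth extension of the unit normal ($X=\nu$ on $\partial\Omega$) and $\chi_\rho$ a cutoff at scale $\rho$; the boundary term then produces $\frac12\int_{\partial\Omega}\chi_\rho(\abs{\partial_\nu u}^2-\abs{\nabla_{\partial\Omega}u_b}^2)$ while the bulk terms are controlled by the energy in $B_{2\rho}$, yielding $\int_{\partial\Omega\cap B_\rho}\abs{\partial_\nu u}^2\leq C(\rho^{n-1}+\rho^{n-3}\varphi(2\rho))$. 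Without supplying this (or an equivalent) argument, your proof assumes the key lemma rather than proving it.

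A secondary problem lies in your correction field $Y$. With only $\norm{Y}_{L^\infty}+\norm{\nabla Y}_{L^\infty}\leq C(\Omega)$, the bulk remainder $\mathcal R(\rho)$ is of size $C\int_{\Omega\cap B_\rho}e_\e(u_\e)$, and after dividing by $\rho^{n-1}$ this contributes $C\rho^{-1}\varphi(\rho)$, which is \emph{not} bounded by $C(1+\varphi(2\rho))$ for small $\rho$; to make this harmless you would need $\abs{\nabla Y(x)}\lesssim\abs{x-x_0}$ near $x_0$ (exploiting $(x-x_0)\cdot\nu(x)=O(\abs{x-x_0}^2)$ for $x_0\in\partial\Omega$), with a construction depending on $x_0$ and a separate treatment of interior $x_0$ close to $\partial\Omega$. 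The paper sidesteps this entirely by keeping the radial field $x-x_0$ and using the geometric inequality $(x-x_0)\cdot\nu\geq -C\abs{x-x_0}^2$ together with $\mathcal H^{n-1}(\partial\Omega\cap B_\rho)\leq C\rho^{n-1}$ to estimate the boundary terms directly. So the weakened form \eqref{monot} is indeed the right target, but as written your argument has a genuine gap at its central estimate and a quantitative flaw in the remainder bookkeeping.
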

\begin{proof}
To simplify notation we drop the explicit dependence on $\e$ and write $u$ for a minimizer of \eqref{E} under the boundary condition \eqref{ub}. We use coordinates in which $x_0=0$ and let $\varphi(\rho)$ denote the renormalized energy
\begin{equation}\label{renormen}
\varphi(\rho)=\rho^{2-n}\int_{\Omega\cap B_\rho}e_\e(u),
\end{equation}
so that $\psi(\rho)=2K\rho+\varphi(\rho)$.

Before proceeding with the proof, let us recall that smooth  solutions to the Euler-Lagrange equations {\eqref{pde}} satisfy that their associated stress-energy tensor is divergence free
\begin{equation}\label{statio}
\partial_\ell T_{\ell j} =0,\qquad T_{\ell j} :=\partial_\ell u\cdot\partial_j u -\left(\frac {1}2 \abs{\nabla u}^2+ \frac{1}{\e^2}f(u)\right)\delta_{j\ell}.
\end{equation}
As usual, the monotonicity formula follows from \eqref{statio}.

The beginning of the proof (until \eqref{monot2} below) is similar to \cite[Lemma~9]{majumdarzarnescu10}.
Multiplying \eqref{statio} by $x_j$ and integrating by parts in $\Omega\cap B_\rho$ yields
\begin{align*}
\frac{d\varphi}{d\rho} & = \frac{2}{\e^2}\rho^{1-n}\int_{\Omega\cap B_\rho}  f(u)
+{\rho^{-n}\int_{\Omega\cap\partial B_\rho}\abs{(x\cdot\nabla) u}^2} \\
&\quad +\rho^{1-n}\int_{\partial\Omega\cap B_\rho} (x\cdot\nabla)u\cdot \frac{\partial u}{\partial\nu} - \rho^{1-n}\int_{\partial\Omega\cap B_\rho} (x\cdot\nu) \, e_\e(u)\\
& \geq  \rho^{1-n}\int_{\partial\Omega\cap B_\rho} (x\cdot\nabla)u\cdot \frac{\partial u}{\partial\nu} - \rho^{1-n}\int_{\partial\Omega\cap B_\rho} (x\cdot\nu) \, e_\e(u)\\
& = \rho^{1-n}\int_{\partial\Omega\cap B_\rho} (x\cdot\nabla)u\cdot \frac{\partial u}{\partial\nu} - \frac 12 \rho^{1-n}\int_{\partial\Omega\cap B_\rho} (x\cdot\nu) \, \abs{\nabla u}^2.
\end{align*}
Here $\nu=\nu(x)$ denotes the exterior unit normal to $\partial\Omega$ at $x$, and for the last equality we have used the fact that $u_{\lfloor \partial\Omega}$ takes values into $\mathcal N$. Introducing the {(non unit) tangential } vector  $\tau(x):=x-(x\cdot \nu)\nu$ and noticing that it holds
\begin{equation*}
(x\cdot\nabla)u\cdot \frac{\partial u}{\partial\nu}=(\tau\cdot\nabla)u \cdot \frac{\partial u}{\partial\nu} + (x\cdot\nu)\abs{\frac{\partial u}{\partial\nu}}^2,
\end{equation*}

we rewrite the above as
\begin{align*}
\rho^{n-1}\frac{d\varphi}{d\rho} & \geq \int_{\partial\Omega\cap B_\rho} (\tau\cdot\nabla)u\cdot \frac{\partial u}{\partial\nu} + \frac 12 \int_{\partial\Omega\cap B_\rho} (x\cdot\nu ) \abs{\frac{\partial u}{\partial\nu}}^2 \\
&\quad -\frac 12 \int_{\partial\Omega\cap B_\rho} (x\cdot\nu ) \left(\abs{\nabla u}^2-\abs{\frac{\partial u}{\partial\nu}}^2\right).
\end{align*}
Since $\tau$ is tangent to $\partial\Omega$ and $\abs{\tau}\leq\rho$ we have
\begin{equation*}
(\tau\cdot\nabla)u\cdot \frac{\partial u}{\partial\nu} \leq \rho \left(\sup\abs{\nabla_{\partial\Omega} u_b}\right)\abs{\frac{\partial u}{\partial\nu}}
\leq \frac 12 \sup\abs{\nabla_{\partial\Omega} u_b}^2 +\frac 12 \rho^2 \abs{\frac{\partial u}{\partial\nu}}^2,
\end{equation*}
and using also that $\abs{\nabla u}^2-\abs{\frac{\partial u}{\partial\nu}}^2=\abs{\nabla_{\partial\Omega}u}^2$ we deduce
\begin{equation}\label{monot1}
\rho^{n-1}\frac{d\varphi}{d\rho}\geq \frac 12  \int_{\partial\Omega\cap B_\rho} (x\cdot \nu -\rho^2) \abs{\frac{\partial u}{\partial\nu}}^2 - \mathcal H^{n-1}(\partial\Omega\cap B_\rho) \sup\abs{\nabla_{\partial\Omega}u_b}^2 .
\end{equation}
Since $\Omega$ is a smooth bounded domain, there exists a constant $C=C(\Omega)>0$ such that for all $x_0\in\overline\Omega$ it holds
\begin{gather*}
\mathcal H^{n-1}(\partial\Omega\cap B_\rho(x_0))\leq C\,\rho^{n-1},\quad\text{and}\\
(x-x_0)\cdot\nu(x)\geq - C\,\abs{x-x_0}^2\quad\text{for }x\in\partial\Omega.
\end{gather*}
For the proof of these two facts {see e.g.} \cite[Lemma~II.5 ]{LinRiv2} and \cite[Lemma~8]{majumdarzarnescu10}. Using this in \eqref{monot1} we obtain
\begin{equation}\label{monot2}
\frac{d\varphi}{d\rho}\geq -C(\Omega,u_b)\left(1 +  \rho^{3-n}\int_{\partial\Omega\cap B_\rho}\abs{\frac{\partial u}{\partial\nu}}^2 \right),
\end{equation}
for some constant $C(\Omega,u_b)>0$.
For $n=3$, one may conclude using \cite[Lemma~10]{majumdarzarnescu10}. But we want to deal with general $n\geq 3$, and from this point on our proof departs from \cite{majumdarzarnescu10}. Consider a smooth function $\chi(r)$ satisfying
\begin{equation}\label{chi}
\abs{\chi}\leq 1,\quad\abs{\chi'}\leq 2,\quad\chi\equiv 1\text{ in }[0,1],\quad \chi\equiv 0\text{ in }[2,\infty),
\end{equation}
and let $\chi_\rho(x):=\chi(\abs{x}/\rho)$. Fix also a smooth vector field $X$ such that $X=\nu$ on $\partial\Omega$. Multiplying \eqref{statio} by $\chi_\rho X$ and integrating in $\Omega$ we have
\begin{align*}
\frac 12 \int_{\partial\Omega\cap B_\rho}\abs{\frac{\partial u}{\partial\nu}}^2 &
\leq \frac 12 \int_{\partial\Omega}\chi_\rho \abs{\frac{\partial u}{\partial\nu}}^2 \\
& = \frac 12 \int_{\partial\Omega}\chi_\rho \abs{\nabla_{\partial\Omega}u_b}^2 + \int_{\Omega}\chi_\rho (\partial_l X_j) T_{\ell j} +\int_{\Omega} \partial_\ell\chi_\rho X_j T_{\ell j}\\
&\leq C(\Omega, u_b)\left( \rho^{n-1} + \rho^{n-2}\varphi(2\rho) + \rho^{n-3}\varphi(2\rho) \right).
\end{align*}
Plugging this estimate into \eqref{monot2} we find
\begin{equation*}
\frac{d\varphi}{d\rho}\geq -C(\Omega,u_b)\left(1 +  \varphi(2\rho) \right),
\end{equation*}
which, recalling $\psi(\rho)=2K\rho+\varphi(\rho)$,  gives \eqref{monot} for $K=C$.
\end{proof}

The relevance of the monotonicity formula provided by Proposition~\ref{prop:monot} is that it allows to deduce smallness of the energy at all scales from smallness of the energy at one fixed scale, in the following sense:

\begin{lem}\label{lem:quasimonot}
There exist $\rho_*>0$ and $\alpha_0>0$ depending on $\Omega$ and $u_b$ such that for any $x_0\in\overline\Omega$ and any $\rho_0\in (0,\rho_*)$, if
\begin{equation*}
\rho^{2-n}\int_{\Omega\cap B_\rho(x_0)}e_\e(u_\e) \leq \alpha \leq \alpha_0 \qquad \forall \rho\in [\rho_0,2\rho_0],
\end{equation*}
then
\begin{equation*}
\rho^{2-n}\int_{\Omega\cap B_\rho(x_0)}e_\e(u_\e) \leq  \alpha + 2K \rho_0 \qquad \forall \rho\in (0,\rho_0).
\end{equation*}
\end{lem}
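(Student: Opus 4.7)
The strategy is to use the differential inequality $\psi'(\rho)\geq K(1-\psi(2\rho))$ from Proposition~\ref{prop:monot}, which forces $\psi$ to be non-decreasing on any interval where $\psi(2\cdot)\leq 1$. The plan is therefore to choose $\alpha_0$ and $\rho_*$ small enough that the assumption of the lemma implies $\psi\leq 1/2$ on $[\rho_0,2\rho_0]$, and then to bootstrap this upper bound from $[\rho_0,2\rho_0]$ down to the whole of $(0,2\rho_0]$ via a continuity argument.

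Concretely, I would fix $\alpha_0,\rho_*>0$ so that $\alpha_0+4K\rho_*\leq 1/2$ and $\rho_*<1/2$ (so that $2\rho_0$ lies in the range $(0,1)$ where Proposition~\ref{prop:monot} applies). Under the hypothesis,
\[
\psi(\rho)=2K\rho+\rho^{2-n}\int_{\Omega\cap B_\rho(x_0)}e_\e(u_\e)\leq 4K\rho_0+\alpha\leq \tfrac12 \qquad\text{for all }\rho\in[\rho_0,2\rho_0].
\]

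For the bootstrap, let
\[
\rho_1:=\inf\bigl\{\rho\in(0,\rho_0]:\psi(s)\leq 1\text{ for every }s\in[\rho,2\rho_0]\bigr\}.
\]
The set on the right contains $\rho_0$, so $\rho_1\in[0,\rho_0]$ is well defined; since $u_\e$ is smooth (Remark~\ref{rem:usmooth}) the function $\psi$ is continuous on $(0,2\rho_0]$, hence the closed condition $\psi\leq 1$ is preserved in the infimum and $\psi(s)\leq 1$ on $[\rho_1,2\rho_0]$. If $\rho_1>0$, then for every $\rho\in[\rho_1,\rho_0]$ one has $2\rho\in[2\rho_1,2\rho_0]\subset[\rho_1,2\rho_0]$, so $\psi(2\rho)\leq 1$ and \eqref{monot} yields $\psi'(\rho)\geq 0$. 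Thus $\psi$ is non-decreasing on $[\rho_1,\rho_0]$ and in particular
\[
\psi(\rho_1)\leq\psi(\rho_0)\leq\tfrac12.
\]
By continuity there is $\delta\in(0,\rho_1)$ with $\psi(s)\leq 3/4$ on $[\rho_1-\delta,\rho_1]$, so $\psi\leq 1$ on all of $[\rho_1-\delta,2\rho_0]$, contradicting the definition of $\rho_1$. Therefore $\rho_1=0$ and $\psi\leq 1$ throughout $(0,2\rho_0]$.

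Applying the same non-decreasing argument to the full interval, $\psi'(\rho)\geq 0$ for every $\rho\in(0,\rho_0]$, so $\psi$ is non-decreasing there. Hence for $\rho\in(0,\rho_0)$,
\[
\rho^{2-n}\int_{\Omega\cap B_\rho(x_0)}e_\e(u_\e)=\psi(\rho)-2K\rho\leq\psi(\rho_0)\leq\alpha+2K\rho_0,
\]
which is the desired bound. The delicate point is the bootstrap: one needs the bound on $\psi$ at $\rho_0$ to lie \emph{strictly} below the threshold $1$ so that continuity genuinely extends the estimate past $\rho_1$ — this is exactly the reason for requiring $\alpha_0+4K\rho_*\leq 1/2$ rather than something weaker.
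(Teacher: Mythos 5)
Your proof is correct and follows essentially the same route as the paper: both establish $\psi\leq 1/2$ on $[\rho_0,2\rho_0]$ from the hypothesis, run a continuity/infimum bootstrap using the differential inequality \eqref{monot} to propagate the bound down to $\rho=0$, and then use the resulting monotonicity of $\psi$ to conclude $\varphi(\rho)\leq\psi(\rho)\leq\psi(\rho_0)\leq\alpha+2K\rho_0$. The only difference is cosmetic (you use the threshold $1$ for the bootstrap set and $3/4$ at the contradiction step, while the paper keeps the single threshold $1/2$ and uses strict positivity of $\psi'$), so no further comment is needed.
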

\begin{proof}
If $\alpha_0$ and $\rho_*$ are small enough (depending on $K$) then $\psi(\rho)\leq 1/2$ for all $\rho\in [\rho_0,2\rho_0]$. Let
\begin{equation*}
\rho_1:=\inf\left\lbrace \rho\in [0,\rho_0]\colon \psi(r)\leq 1/2 \;\; \forall r\in (\rho,\rho_0]\right\rbrace,
\end{equation*}
and assume that $\rho_1>0$. Then it would hold $\psi(2\rho_1)\leq1/2$ and by \eqref{monot} this implies $\psi'(\rho_1)>0$, so that $\psi(r)<\psi(\rho_1)\leq 1/2$ for all $r \in (\rho_1-\delta,\rho_1)$, contradicting the definition of $\rho_1$. We deduce that $\psi\leq 1/2$ and $d\psi/d\rho>0$ in $(0,\rho_0]$. Therefore it holds $\psi\leq \psi(\rho_0)$ and this concludes the proof.
\end{proof}

\section{Small energy estimate}\label{s:estim}

In this section we derive the small energy estimate that provides a uniform Lipschitz bound provided the energy is small at all scales.

\begin{prop}\label{prop:smallestim}
There exist $\e_0>0$, $\eta_0>0$ and $C>0$ (depending on {$f$, $\Omega$ and $u_b$}) such that for all $\e\in (0,\e_0)$, {$r\in (0,1)$} and $x_0\in\overline\Omega$, any smooth solution $u$ of \eqref{pde}-{\eqref{ub}} with
\begin{equation*}
E:=\sup_{B_\rho(x)\subset B_{2r}(x_0)} \; \rho^{2-n}\int_{\Omega\cap B_\rho(x)} e_\e(u) \leq \eta_0,
\end{equation*}
satisfies
\begin{equation*}
r^{2}\sup_{B_{r/2}}\, e_\e(u) \leq C\, \left( E + {r^2} \right).
\end{equation*}
\end{prop}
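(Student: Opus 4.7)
The plan is to follow the classical $\e$-regularity blueprint: combine a Bochner-type differential inequality for the energy density $e_\e(u)$ with a boundary gradient bound, and conclude by a Schoen--Uhlenbeck maximum-function argument.

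For the Bochner step, starting from \eqref{pde} and the smoothness of $u$ granted by Remark~\ref{rem:usmooth}, a direct computation combining the Bochner identity for $|\nabla u|^2$ and the chain rule for $\Delta f(u)$ gives
\begin{equation*}
\Delta e_\e(u) \;=\; |\nabla^2 u|^2 \;+\; \frac{2}{\e^2}\sum_i \nabla^2 f(u)[\partial_i u,\partial_i u] \;+\; \frac{1}{\e^4}|\nabla f(u)|^2 .
\end{equation*}
The only indefinite term is the middle one, carrying the bad prefactor $\e^{-2}$. To tame it I would invoke the nondegeneracy condition \eqref{nondegen}, which yields a Lojasiewicz-type estimate $c\,f(z) \leq |\nabla f(z)|^2$ in a neighbourhood of $\mathcal N$; together with the uniform $L^\infty$-bound \eqref{unifbound} this lets me absorb the indefinite term into the two nonnegative ones and obtain a robust subsolution property of the form $\Delta e_\e(u) \geq -C\bigl(e_\e(u)+1\bigr)$ with $C$ independent of $\e$. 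In parallel, I would establish the boundary bound $e_\e(u)\leq C$ on $\partial\Omega$ through computations in a boundary chart following \cite{chenlin93}: the tangential part of $\nabla u$ on $\partial\Omega$ is controlled by $u_b \in C^2(\partial\Omega;\mathcal N)$, the normal part is controlled by testing \eqref{pde} against a transverse vector field, and the contribution from $\e^{-2}f(u_b)$ vanishes since $u_b(\partial\Omega)\subset\mathcal N$.

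With these two ingredients, I would run the Schoen--Uhlenbeck argument. Introduce the function $G(y) := (r - |y-x_0|)^2 e_\e(u(y))$ on $\overline\Omega \cap \overline{B_r(x_0)}$. Then $G \equiv 0$ on $\partial B_r(x_0)$ and $G \leq C r^2$ on $\partial\Omega \cap B_r(x_0)$ by the boundary bound, so it suffices to estimate $G(y_0)$ at a maximum point. If $y_0 \in \partial\Omega$ we are done; otherwise set $\sigma := r - |y_0-x_0| \in (0,r]$ and $M := e_\e(u(y_0))$, and note that $e_\e(u) \leq 4M$ on $B_{\sigma/2}(y_0)\cap\Omega$ by maximality of $G$. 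The standard local boundedness result for subsolutions of $\Delta w + C(w+1) \geq 0$ (boundary-adapted via the boundary bound when $B_{\sigma/2}(y_0)$ touches $\partial\Omega$) then gives $M \leq C\sigma^{-n}\int_{\Omega \cap B_{\sigma/2}(y_0)} e_\e(u) + C$. Since $B_{\sigma/2}(y_0) \subset B_{2r}(x_0)$, the hypothesis on $E$ bounds the integral by $E(\sigma/2)^{n-2}$, whence $M \leq C(E\sigma^{-2} + 1)$. Multiplying by $\sigma^2 \leq r^2$ yields $G(y_0) \leq C(E + r^2)$, and observing that $r - |y-x_0| \geq r/2$ on $B_{r/2}(x_0)$ finally produces the desired estimate $r^2 \sup_{B_{r/2}(x_0)\cap\Omega} e_\e(u) \leq 4 G(y_0) \leq C(E + r^2)$.

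I expect the main obstacle to be the first step: converting the bare nondegeneracy condition \eqref{nondegen} into an $\e$-independent subsolution inequality for $e_\e(u)$, bypassing the specific algebraic structure of the Landau--de Gennes potential that is exploited in \cite{nguyenzarnescu13}. A secondary technical point is handling the local boundedness inequality in the max-function argument when $B_{\sigma/2}(y_0)$ touches $\partial\Omega$: this is dealt with via the boundary estimate $e_\e(u) \leq C$, either by locally flattening $\partial\Omega$ and invoking a boundary weak-Harnack inequality, or by a direct extension/reflection of $u$ across $\partial\Omega$ using the smoothness of $u_b$.
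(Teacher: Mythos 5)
Your overall blueprint (Bochner inequality plus boundary gradient control plus a Schoen-type maximum-function argument) is the same as the paper's, but the linchpin of your argument is a Bochner inequality that is false as stated. From \eqref{bochner1}-type computations one does \emph{not} get $\Delta e_\e(u)\geq -C\,(e_\e(u)+1)$ with $C$ independent of $\e$: near $\mathcal N$ the indefinite term $\frac{2}{\e^2}\nabla^2 f(u)[\partial_i u,\partial_i u]$ is bounded below only by $-\frac{C}{\e^2}\abs{u^\perp}\abs{\nabla u}^2$, and absorbing the $\e^{-2}$ via Young and the nondegeneracy bound $\abs{\nabla f(u)}^2\geq c\abs{u^\perp}^2$ inevitably leaves a term $C\abs{\nabla u}^4$, i.e.\ the \emph{quadratic} inequality $-\Delta e_\e(u)\leq C\,e_\e(u)^2$ of Lemma~\ref{lem:bochner}; away from a tubular neighbourhood of $\mathcal N$ there is no absorption at all and the best one gets is a constant of order $\e^{-2}$. (If your linear, $\e$-independent inequality were true, local boundedness would give interior Lipschitz bounds with no smallness hypothesis, and in the limit $\e\to0$ this would contradict the existence of singular minimizing harmonic maps.) This has two consequences your proposal does not address. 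First, with the true quadratic inequality your mean-value step becomes circular: on $B_{\sigma/2}(y_0)$ you can only write $-\Delta e\leq CMe$, so the constant in the local boundedness estimate depends on $M\sigma^2\approx G(y_0)$, which is exactly the quantity you are trying to bound. Second, the inequality is only valid where $\dist(u,\mathcal N)<\delta$, and nothing in your argument guarantees this near the maximum point. The paper's Step~2 (the contradiction argument showing $\rho_2\leq 1$, i.e.\ $G$ at the maximum is of order one, and that $\e^2 V$ is small, whence $\dist(u,\mathcal N)<\delta$ via elliptic estimates, the mean value theorem and Harnack) is precisely the machinery that fills both holes; your proposal has no substitute for it, and you only flag it as an expected ``obstacle''.

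A second, smaller but still genuine gap is the boundary bound $e_\e(u)\leq C$ on $\partial\Omega$. Testing \eqref{pde} against a transverse vector field (as in the monotonicity section) controls $\int_{\partial\Omega}\chi_\rho\abs{\partial u/\partial\nu}^2$, i.e.\ an \emph{integral} quantity, not the pointwise normal derivative; a uniform pointwise bound independent of $\e$ is not available unconditionally. What one can prove is the conditional estimate of Lemma~\ref{lem:bdryestim}: under $\dist(u,\mathcal N)\leq\delta$, $\sup_{\partial\Omega}\abs{\nabla u}$ is controlled by local $L^p$ norms of $e_\e(u)$ plus boundary data, obtained through the barrier argument for $\abs{u^\perp}$ (inequality \eqref{Deltauperp} and the bound $\abs{u^\perp}\lesssim\dist(\cdot,\partial\Omega)$), and this estimate is then usable only after the rescaling step where the energy hypothesis makes its right-hand side small. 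So both of your two ``ingredients'' need to be replaced by their conditional, energy-dependent versions, and the missing Step~2-type argument is then essential to make the maximum-function scheme close.
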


The strategy of the proof is the same as in \cite[Lemma~12]{nguyenzarnescu13}.
One crucial ingredient is a Bochner-type inequality which provides an elliptic equation satisfied by the energy density $e_\e(u)$:
\begin{lem}\label{lem:bochner}
There exists $\delta>0$ and $C>0$ depending only on the potential $f$ such that for any smooth solution $u$ of \eqref{pde} it holds
\begin{equation}\label{bochner}
-\Delta [e_\e(u)] \leq C e_\e(u)^2\quad\text{at }x\in\Omega,
\end{equation}
provided $\dist(u(x),\mathcal N)< \delta$.
\end{lem}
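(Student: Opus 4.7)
The plan is to start from the Bochner-type identity
\begin{equation*}
\Delta e_\e(u) = |\nabla^2 u|^2 + \frac{2}{\e^2}\sum_{i=1}^n \nabla^2 f(u)(\partial_i u,\partial_i u) + \frac{1}{\e^4}|\nabla f(u)|^2,
\end{equation*}
which follows by applying the vector Bochner formula $\Delta(\tfrac{1}{2}|\nabla u|^2)=|\nabla^2 u|^2+\nabla u\cdot\nabla\Delta u$, computing $\Delta f(u)$ via the chain rule, and using \eqref{pde} each time $\Delta u$ appears. Two of the three terms on the right are manifestly nonnegative, so the only obstruction to $-\Delta e_\e(u)\leq 0$ is the Hessian-of-$f$ term, which can change sign when $u$ lies off $\mathcal N$.

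The core of the argument is then to control this bad term using the nondegeneracy assumption (f3). Fix $\delta$ small enough that the nearest-point projection $\pi$ onto $\mathcal N$ is smooth in $\{\dist(\cdot,\mathcal N)<\delta\}$, and set $p:=\pi(u)$, $n:=u-p\in (T_p\mathcal N)^\perp$. Since $\mathcal N$ is a minimum set of $f$, we have $f(p)=0$ and $\nabla f(p)=0$, and $\nabla^2 f(p)$ is positive semi-definite with kernel $T_p\mathcal N$; by (f3) combined with compactness of $\mathcal N$, its eigenvalues on $(T_p\mathcal N)^\perp$ are bounded below by some fixed $c>0$, uniformly in $p$. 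A Taylor expansion about $p$ (shrinking $\delta$ if necessary) then produces
\begin{equation*}
f(u)\geq \tfrac{c}{4}|n|^2,\qquad |\nabla f(u)|^2\geq \mu\, f(u),\qquad -\nabla^2 f(u)(\xi,\xi)\leq C_0|n|\,|\xi|^2
\end{equation*}
for some constants $\mu,C_0>0$ depending only on $f$. Combining the first and third estimates yields the crucial pointwise bound $-\nabla^2 f(u)(\xi,\xi)\leq C_1\sqrt{f(u)}\,|\xi|^2$.

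The conclusion is then obtained by a Young's inequality that pits the bad Hessian term against the favourable $|\nabla f(u)|^2/\e^4$ term in the identity above. Writing
\begin{equation*}
\frac{2C_1}{\e^2}\sqrt{f(u)}\,|\nabla u|^2 \leq \frac{\mu\,f(u)}{\e^4} + \frac{C_1^2}{\mu}|\nabla u|^4
\end{equation*}
and absorbing the $\mu f(u)/\e^4$ term into $|\nabla f(u)|^2/\e^4\geq \mu f(u)/\e^4$ gives $-\Delta e_\e(u)\leq (C_1^2/\mu)|\nabla u|^4\leq (4C_1^2/\mu)\,e_\e(u)^2$, which is the claim.

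The main obstacle is arranging Young's inequality so that the final constant is $\e$-independent: the naïve bound $|\nabla^2 f(u)|\leq C$ only yields $-\Delta e_\e(u)\leq C|\nabla u|^2/\e^2$, which is too weak because of the $\e^{-2}$ factor. Replacing this $\e^{-2}$ by the correct $e_\e(u)$ scaling hinges precisely on the two bounds $|n|\leq C\sqrt{f(u)}$ and $|\nabla f(u)|^2\geq \mu f(u)$ extracted from assumption (f3).
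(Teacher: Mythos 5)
Your proposal is correct and follows essentially the same route as the paper: the same Bochner identity (the paper writes the last term as $\abs{\Delta u}^2=\e^{-4}\abs{\nabla f(u)}^2$ via the equation), the same one-sided Hessian bound $-\xi\cdot\nabla^2 f(u)\xi\leq C\abs{u-\pi(u)}\abs{\xi}^2$ from nondegeneracy, and the same Young's-inequality absorption of the bad term into $\e^{-4}\abs{\nabla f(u)}^2\gtrsim\e^{-4}\abs{u-\pi(u)}^2$. The only cosmetic difference is that the paper packages the Taylor expansion as $f(z)=z^\perp\cdot A(z)z^\perp$ with an explicit integral-form $A$, whereas you expand directly about $\pi(u)$ and phrase the estimates in terms of $\sqrt{f(u)}$ instead of $\abs{u^\perp}$, which are comparable quantities.
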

In \cite{majumdarzarnescu10} the proof is provided in the special case of the liquid crystal potential. In the general case there is no additional difficulty. We present the proof here in order to make transparent how the only crucial assumption is the nondegeneracy \eqref{nondegen}. First we set a bit of notation and reformulate \eqref{nondegen} into the form that we are actually going to use.

For $\delta>0$ we denote by $\mathcal N_\delta$ the tubular $\delta$-neighborhood of $\mathcal N$,
\begin{equation*}
\mathcal N_\delta :=\left\lbrace z\in\R^k \colon \dist(z,\mathcal N)< \delta\right\rbrace.
\end{equation*}
There exists $\delta>0$ such that the canonical projection
\begin{equation*}
\pi=\pi_{\mathcal N}\colon \mathcal N_\delta  \longrightarrow \mathcal N\subset\R^k,
\end{equation*}
is well-defined and smooth. Note that the differential of $\pi$ at $z\in\mathcal N_\delta$ is simply the orthogonal projection on $T_{\pi(z)}\mathcal N$:
\begin{equation*}
D\pi(z)=\pi_{tan}(z):=\proj_{T_{\pi(z)}\mathcal N}\in\mathcal L(\R^k).
\end{equation*}
We denote by $\pi_{nor}(z)$ the projection on $(T_{\pi(z)}\mathcal N)^\perp$,
\begin{equation*}
\pi_{nor}(z):=I-\pi_{tan}(z)=\proj_{(T_{\pi(z)}\mathcal N)^\perp}\in\mathcal L(\R^k).
\end{equation*}
Next, we write the potential in a form that adapts well to our purposes in that it really emphasizes how it all boils down to nondegeneracy.  To that end let us observe that a Taylor expansion, for $z\in\mathcal N_\delta ,$ yields
\begin{equation}\label{A}
f(z)=z^\perp \cdot A(z) z^\perp,\quad z^\perp:=z-\pi(z),
\end{equation}
for some smooth map $A\colon \mathcal N_\delta \to \R^{k\times k}_{sym}$. More precisely, the representation \eqref{A} follows from   Taylor's formula for the function $t\mapsto f(t z +(1-t)\pi(z))$ between $t=0$ and $t=1$, using the facts that $f(\pi(z))=0$ and $\nabla f(\pi(z))=0$, and the map $A$ can be explicitly expressed as
\begin{equation*}
A(z)=\int_0^1 (1-t) \nabla^2 f(t z +(1-t)\pi(z)) \,dt.
\end{equation*}
Let us also notice that \eqref{N} and the nondegeneracy assumption \eqref{nondegen} ensure that, provided $\delta$ is small enough, $A(z)$ is uniformly positive definite in the direction normal to $\mathcal N$, that is
\begin{equation}\label{csqnondegen}
\xi\cdot A(z)\xi\geq \alpha_0 \abs{\xi}^2\quad\forall \xi \perp T_{\pi(z)}\mathcal N,
\end{equation}
for some $\alpha_0>0$.
We may now proceed to the proof of the Bochner inequality.

\begin{proof}[Proof of Lemma~\ref{lem:bochner}.]
We write $e=e_\e(u)$ and compute
\begin{equation}\label{bochner1}
\Delta e =\abs{\nabla^2 u}^2 +\abs{\Delta u}^2 +\frac {2}{\e^2}\partial_k u \cdot (\nabla^2 f(u) \partial_k u).
\end{equation}
From \eqref{A}, we see that
\begin{align}
\nabla f(z)& = 2\pi_{nor}(z)A(z)z^\perp + z^\perp \cdot\nabla A(z) z^\perp,\label{nablaf}\\
\nabla^2 f(z) & =2\pi_{nor}(z)A(z)\pi_{nor}(z) +z^\perp\cdot\nabla^2 A(z) z^\perp \nonumber \\
&\quad  +{4}\pi_{nor}(z) \nabla A(z) z^\perp  + 2\nabla\pi_{nor}(z) A(z) z^\perp .\label{hessf}
\end{align}
{The first term in the expression of $\nabla^2 f(z)$ being a nonnegative symmetric matrix thanks to \eqref{csqnondegen},} implies that for any $\xi\in\R^k$ and $z\in\mathcal N_\delta$ we have
\begin{equation*}
\xi\cdot\nabla^2 f(z)\xi \geq  -C\abs{\xi}^2\abs{z^\perp},
\end{equation*}
from which we infer
\begin{equation}\label{bochner2}
\partial_k u \cdot (\nabla^2 f(u) \partial_k u)\geq -C \left( \frac{\eta}{\e^2} \abs{u^\perp}^2 + \frac{\e^2}{\eta} \abs{\nabla u}^4\right),
\end{equation}
for an arbitrary $\eta>0$, to be chosen later. Plugging \eqref{bochner2} into \eqref{bochner1} we deduce that
\begin{equation*}
-\Delta e \leq \frac{C}{\eta}\abs{\nabla u}^4 + \left( C\frac{\eta}{\e^4}\abs{u^\perp}^2 - \abs{\Delta u}^2\right).
\end{equation*}
Finally we remark that, provided $\delta$ is chosen small enough, \eqref{csqnondegen}-\eqref{nablaf} ensure
\begin{equation*}
\abs{\Delta u}^2=\frac{1}{\e^4}\abs{\nabla f(u)}^2 \geq \frac{1}{\e^4}\frac{\alpha_0^2}{2} \abs{u^\perp}^2\quad\text{ if }\dist(u,\mathcal N)<\delta.
\end{equation*}
Choose $\eta\leq\alpha_0^2/2C$ to finish the proof.
\end{proof}

As explained in the introduction, the main point at which our proof of Proposition~\ref{prop:smallestim} differs from \cite{nguyenzarnescu13} is the treatment of the estimates for $\abs{\nabla u}$ on $\partial\Omega$ that are needed to make good use of the Bochner inequality at the boundary. While in \cite{nguyenzarnescu13} the authors relied heavily on the particular structure of their potential, our argument, closer to \cite{chenlin93}, uses only the nondegeneracy assumption \eqref{nondegen}.

\begin{lem}\label{lem:bdryestim} Let $p>n .$
There exist $\delta,C>0$ (depending on $p, f$ and $\Omega$) such that for any $x_0\in\overline\Omega$, $r,\e\in(0,1]$, and $u$ smooth solution of \eqref{pde} with boundary conditions \eqref{ub}, if
\begin{equation*}
\dist(u,\mathcal N)\leq\delta \mbox{ in } B_r(x)\cap\Omega,
\end{equation*}
then
it holds
\begin{eqnarray*}
\sup_{B_{r/2}(x)\cap \partial\Omega} \abs{\nabla u} \leq C \left( r^{{1-\frac np}}\norm{e_{{\e/r}}(u)}_{L^p(B_r(x)\cap\Omega)}  \right.
&+& r^{-n/2}\norm{\nabla u}_{L^2(B_r\cap\Omega)} \\
&+&\left. \sup_{\partial\Omega}|\nabla u|+\sup_{\partial\Omega} r|\nabla^2 u|+\frac\delta r \right).
\end{eqnarray*}
\end{lem}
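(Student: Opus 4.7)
The plan is to rescale to unit scale, locally flatten the boundary, and apply boundary elliptic regularity to the decomposition $u=\pi(u)+u^\perp$ given by the nearest-point projection $\pi:\mathcal{N}_\delta\to\mathcal{N}$. The main obstacle is that the PDE \eqref{pde} carries the singular factor $\frac{1}{\e^{2}}\nabla f(u)$, whose $L^{p}$ norm is \emph{not} controlled by $e_\e(u)$, so a direct application of $W^{2,p}$ estimates to $u$ fails. The decomposition transfers this singular part into the normal direction, where it becomes a coercive term that can be handled by barriers.

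Setting $v(y):=u(x+ry)$ and $\tilde\e:=\e/r$, the map $v$ solves $\Delta v=\frac{1}{\tilde\e^{2}}\nabla f(v)$ in $B_{1}\cap\Omega_{r}$, where $\Omega_{r}:=(\Omega-x)/r$. Since $\Omega$ is smooth and $r\leq 1$, composing with a boundary-straightening diffeomorphism whose $C^{2}$-norm is bounded independently of $r$ reduces the problem to a uniformly elliptic equation $\mathcal{L}v=\frac{1}{\tilde\e^{2}}\nabla f(v)$ on the half-ball $B_{1}^{+}$, with $v=v_{b}$ on $\{y_{n}=0\}\cap B_{1}$. The statement of the lemma becomes a bound on $\sup_{\{y_{n}=0\}\cap B_{1/2}}|\nabla v|$ in terms of $\|e_{\tilde\e}(v)\|_{L^{p}(B_{1}^{+})}$, $\|\nabla v\|_{L^{2}(B_{1}^{+})}$, the $C^{2}$-norm of $v_{b}$, and $\delta$.

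Since $\dist(v,\mathcal{N})\leq\delta$, $\pi(v)$ is well-defined and smooth, and the identities $D\pi(v)=\pi_{tan}(v)$ and $\pi_{tan}\pi_{nor}=0$ together with \eqref{nablaf} produce the crucial cancellation of the singular term in the tangent direction:
\begin{equation*}
|\mathcal{L}\pi(v)|\leq C\bigl(f(v)/\tilde\e^{2}+|\nabla v|^{2}\bigr)\leq C\,e_{\tilde\e}(v).
\end{equation*}
As $\pi(v)=v_{b}$ on $\{y_{n}=0\}\cap B_{1}$, the $W^{2,p}$ boundary regularity for the Dirichlet problem applied to $\pi(v)$, combined with the Sobolev embedding $W^{2,p}\hookrightarrow C^{1}$ (valid since $p>n$), yields an $L^{\infty}$ bound on $\nabla\pi(v)$ up to the flat boundary of exactly the form appearing on the right-hand side of the lemma.

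For the normal component we have $v^\perp=0$ on $\{y_{n}=0\}\cap B_{1}$ (hence its tangential derivatives vanish there) and $|v^\perp|\leq\delta$ throughout; moreover $v^\perp$ solves
\begin{equation*}
-\mathcal{L}v^\perp+\frac{2\,\pi_{nor}(v)A(v)\,v^\perp}{\tilde\e^{2}}=g,\qquad |g|\leq C\,e_{\tilde\e}(v),
\end{equation*}
which is coercive thanks to \eqref{csqnondegen}. A barrier/Green's-function argument, comparing $v^\perp$ with explicit solutions of the scalar model $-w''+cw/\tilde\e^{2}=0$ in the spirit of \cite{chenlin93}, then gives
\begin{equation*}
\sup_{\{y_{n}=0\}\cap B_{1/2}}|\partial_{n}v^\perp|\leq C\left(\|e_{\tilde\e}(v)\|_{L^{p}(B_{1}^{+})}+\|\nabla v\|_{L^{2}(B_{1}^{+})}+\delta\right)
\end{equation*}
uniformly in $\tilde\e$. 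Since $|\nabla v|\leq|\nabla\pi(v)|+|\partial_{n}v^\perp|$ on the flat boundary, combining the two estimates and undoing the rescaling yields the lemma; the $\delta/r$ term emerges from the $L^{\infty}$ smallness of $v^\perp$ after multiplication by the scaling factor. The hardest step is closing this barrier argument uniformly in $\tilde\e$, since the naive elliptic estimate for $v^\perp$ would introduce a spurious factor of $\tilde\e^{-2}$ that the coercivity of the linearised problem must absorb.
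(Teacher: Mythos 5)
Your proposal is correct and follows essentially the same route as the paper: the same nearest-point-projection decomposition $u=\pi(u)+u^{\perp}$, the same cancellation of the singular term in the tangential direction via \eqref{nablaf} leading to $\abs{\Delta u^{\mathcal N}}\leq Ce_\e(u)$ and a $W^{2,p}$ ($p>n$) boundary estimate, and a comparison argument for the normal derivative of $u^{\perp}$. The only divergence is in the normal component: instead of your coercive barrier modeled on $-w''+cw/\tilde\e^{2}=0$ (which still requires passing from the vector system to the scalar quantity $\abs{v^\perp}$ and a Green's-function/splitting step for the merely-$L^{p}$ right-hand side), the paper simply discards the coercive term using its sign, obtaining $-\Delta\abs{u^{\perp}}\leq Ce_\e(u)$ as in \eqref{Deltauperp}, and compares with the solution of a Poisson problem whose gradient is bounded by elliptic estimates — a slightly more elementary version of the same step, with no need for uniformity-in-$\tilde\e$ considerations.
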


\begin{proof}  First note that if $x_0 \in \Omega ,$ the conclusion of the Lemma is vacuously true for sufficiently small radii $r$. Thus, the proof really requires care for $r\geq 1/2 \dist(x_0,\partial \Omega).$
As usual, it suffices to prove the estimate for $r=1$, {the general case following by rescaling.}
Provided $\delta$ is small enough, we may write $u=u^\mathcal N +u^\perp$ where $u^\mathcal N :=\pi_\mathcal N (u)$ is smooth. Using the fact that for a smooth map $v$ with values into $\mathcal N\subset\R^k$, the normal component of its Laplacian is given by
\begin{equation*}
\pi_{nor}(v)\Delta v = II_\mathcal N(v)[\nabla v, \nabla v],
\end{equation*}
 where $II_\mathcal N$ denotes the second fundamental form of $\mathcal N$, we compute the equation satisfied by $u^\mathcal N$:
\begin{equation}\label{eqaN1}
\begin{aligned}
\Delta  u^\mathcal N & = \pi_{nor}(u^{\mathcal N})\Delta u^{\mathcal N}+ \pi_{tan}(u^\mathcal N)\Delta u^{\mathcal N} \\
& =II_\mathcal N(u^\mathcal N)[\nabla u^\mathcal N,\nabla u^\mathcal N] + \pi_{tan}(u^\mathcal N)\Delta u -\pi_{tan}(u^\mathcal N)\Delta u^\perp\\
& = II_\mathcal N(u^\mathcal N)[\nabla u^\mathcal N,\nabla u^\mathcal N] + \frac{1}{\e^2}\pi_{tan}(u^\mathcal N)\nabla f(u)-\pi_{tan}(u^\mathcal N)\Delta u^\perp.
\end{aligned}
\end{equation}
For the last equality we used \eqref{pde}. To compute the last term we remark that taking the Laplacian of the identity $\pi_{tan}(u^\mathcal N)u^\perp\equiv 0$ yields
\begin{align*}
-\pi_{tan}(u^\mathcal N)\Delta u^\perp & =
2\nabla \pi_{tan}(u^\mathcal{N})\cdot \nabla u^\perp +\Delta[\pi_{tan}(u^\mathcal N)]u^\perp \\
& = 2\nabla \pi_{tan}(u^\mathcal{N})\cdot \nabla u^\perp + \nabla \pi_{tan}(u^\mathcal N)u^\perp\cdot \Delta u^\mathcal N\\
&\quad + \nabla^2\pi_{tan}(u^\mathcal N)\left[ \nabla u^\mathcal N,\nabla u^\mathcal N \right]u^\perp.
\end{align*}
Plugging this into \eqref{eqaN1} and recalling \eqref{nablaf} we obtain
\begin{align*}
\Delta u^\mathcal N & = II_\mathcal N (u^\mathcal N)[\nabla u^\mathcal N,\nabla u^\mathcal N] +\frac{1}{\e^2}\pi_{tan}(u^\mathcal N) (u^\perp\cdot \nabla A(u)u^\perp)\\
& \quad + \nabla\pi_{tan}(u^\mathcal N)u^\perp \cdot \Delta u^\mathcal N +\nabla^2\pi_{tan}(u^\mathcal N)[\nabla u^\mathcal N,\nabla u^\mathcal N]u^\perp \\
&\quad  + 2 \nabla [ \pi_{tan}(u^\mathcal N)]\cdot\nabla u^\perp.
\end{align*}
Since $\abs{u^\perp}^2\leq C f(u)$ and $\abs{u^\perp}\leq\delta$ we deduce that it holds
\begin{equation*}
\abs{\Delta u^\mathcal N}\leq C \left(\delta\abs{\Delta u^\mathcal N} +e_\e(u)\right),
\end{equation*}
for a constant $C>0$ depending on $\mathcal N$ and $f$. Choosing $\delta$ small enough we find
\begin{equation}\label{DeltauN}
\abs{\Delta u^\mathcal N}\leq C e_\e(u).
\end{equation}
Then elliptic estimates as in  \cite[Lemma 11]{nguyenzarnescu13} yield
\begin{equation}\label{estimgraduN}
\sup_{B_{1/2}\cap\Omega}\abs{\nabla u^\mathcal N} \leq C\left(\norm{e_\e(u)}_{L^p(B_1\cap\Omega)} + \norm{\nabla u}_{L^2(B_1\cap\Omega)}+ \norm{u_b}_{C^2(\Omega)} \right).
\end{equation}
It remains to bound $\nabla u^\perp$. Since $u_b$ takes values into $\mathcal N$ we have $u^\perp_{\lfloor\partial\Omega}=0$ and it suffices to estimate the normal derivative. First we note that $\abs{u^\perp}\Delta\abs{u^\perp}  \geq u^\perp\cdot \Delta u^\perp$, as can be seen for instance from the identities
\begin{align*}
 2u^\perp\cdot \Delta u^\perp+|\nabla u^\perp|^2=\Delta (|u^\perp|^2)= 2|u^\perp|\Delta |u^\perp|+ |\nabla |u^\perp||^2,
\end{align*}
together with the inequality $|\nabla |u^\perp||^2 \leq |\nabla u^\perp|^2$ {which follows from $\p_i |u^\perp|=u^\perp \cdot\partial_i u^\perp / \abs{u^\perp}$}. Then we use \eqref{pde} and \eqref{nablaf}, to calculate
\begin{align*}
\abs{u^\perp}\Delta\abs{u^\perp} & \geq u^\perp\cdot \Delta u^\perp \\
& = u^\perp\cdot \Delta u - u^\perp\cdot \Delta u^\mathcal N\\
&{= \frac{1}{\e^2} u^\perp\cdot \nabla f(u) - u^\perp\cdot \Delta u^\mathcal N}\\
& = \frac{2}{\e^2}(u^\perp\cdot A(u)u^\perp) + \frac{1}{\e^2}u^\perp\cdot(u^\perp\cdot \nabla A(u) u^\perp) - u^\perp\cdot \Delta u^\mathcal N\\
&\geq -C \abs{u^\perp}e_\e(u).
\end{align*}
For the last inequality we used \eqref{DeltauN} and the facts, implied by \eqref{nondegen}, that $\abs{u^\perp}^2\leq C f(u)$
and $u^\perp\cdot A(u)u^\perp\geq 0$.
Therefore we have
{
\begin{equation}\label{Deltauperp}
-\Delta \abs{u^\perp}\leq C e_\e(u),
\end{equation}
}
and by the maximum principle it holds $\abs{u^\perp}\leq w$, where
\begin{equation*}
-\Delta w = Ce_\e(u) \text{ in }B_1\cap\Omega,\qquad
w=\abs{u^\perp}  \text{ on }\partial(B_1\cap\Omega).
\end{equation*}
{Since $\abs{u^\perp}=0$ on $B_1\cap\partial\Omega$, elliptic estimates as in  \cite[Lemma~11]{nguyenzarnescu13} imply
\begin{equation*}
\sup_{B_{1/2}\cap\Omega}\abs{\nabla w}\leq C \left( \norm{e_\e(u)}_{L^p(B_{3/4}\cap\Omega)} + \norm{\nabla w}_{L^2(B_{3/4}\cap\Omega)}\right).
\end{equation*}
{To estimate the last term one may proceed as in \cite[Lemma~9]{nguyenzarnescu13}. We only sketch the argument here:} splitting $w$ as $w=w_1+w_2$, where $\Delta w_2=0$ and $w_1$ vanishes on the full boundary $\partial(B_1\cap\Omega)$, we have the estimates
\begin{align*}
\norm{w_1}_{L^2(B_{3/4}\cap\Omega)}& \leq C\norm{e_\e(u)}_{L^2(B_1\cap\Omega)}\leq C \norm{e_\e(u)}_{L^p(B_1\cap\Omega)},\\
\norm{w_2}_{L^2(B_{3/4}\cap\Omega)}&\leq C \norm{w_2}_{L^\infty(\partial(B_1\cap\Omega))}\leq C\delta.
\end{align*}
We deduce
\begin{equation*}
\sup_{B_{1/2}\cap\Omega}\abs{\nabla w}\leq C \left( \norm{e_\e(u)}_{L^p(B_{1}\cap\Omega)} + \delta \right).
\end{equation*}
In particular we have the inequalities
\begin{equation*}
\abs{u^\perp}\leq w \leq C \left( \norm{e_\e(u)}_{L^p(B_1\cap\Omega)}+\delta\right) \dist(\cdot,\partial\Omega),
\end{equation*}
which imply
\begin{equation}\label{estimgraduperp}
\sup_{B_{1/2}\cap\partial\Omega}\abs{\frac{\partial u^\perp}{\partial\nu}}\leq C \left(\norm{e_\e(u)}_{L^p(B_1\cap\Omega)}+\delta\right).
\end{equation}
}
The conclusion follows from \eqref{estimgraduN}-\eqref{estimgraduperp}.
\end{proof}

{
\begin{rem}\label{rem:maindiff}
With respect to \cite{nguyenzarnescu13}, our above treatment of the estimate for $\abs{\nabla u}$ on the boundary $\partial\Omega$ is simplified and works for general nonlinearities because we are able to derive the differential inequality \eqref{Deltauperp} satisfied by $\abs{u^\perp}=\dist(u,\mathcal N)$.
\end{rem}
}

Equipped with Lemma~\ref{lem:bochner} and Lemma~\ref{lem:bdryestim} we may now proceed to the proof of the small energy estimate, following \cite[Lemma~12]{nguyenzarnescu13} quite closely. We provide the details of the argument in our setting in the lines below.

\begin{proof}[Proof of Proposition~\ref{prop:smallestim}.]

\textbf{Step 1.} \textit{Rescaling.}

We use coordinates in which $x_0=0$.
We will show that
\begin{equation}\label{smallestim1}
M:=\sup_{0<\rho<r} (r-\rho)^2 \sup_{B_\rho\cap\Omega} \left( e_\e (u) - L \right) \leq C\,E,
\end{equation}
for some $C,L>0$ to be chosen,
which implies the conclusion.
This allows to make use of a rescaling trick introduced in \cite{schoen84} in the context of harmonic maps. There exist $\rho_0\in [0,r]$ and $x_1\in \overline B_{\rho_0}\cap\overline\Omega$ such that
\begin{equation*}
M=(r-\rho_0)^2 \sup_{B_{\rho_0}\cap\Omega} \left( e_\e(u) -{L}\right) = (r-\rho_0)^2 \left[ e_\e(u)(x_1) -{L}\right].
\end{equation*}
With $\rho_1:=(r-\rho_0)/2$, it holds $M=4\rho_1^2 [ e_\e(u)(x_1)-{L} ]$ and
\begin{align*}
\sup_{B_{\rho_1}(x_1)\cap\Omega}e_\e(u) & \leq \sup_{B_{\rho_1+\rho_0}\cap\Omega}e_\e(u) \leq \frac{M}{(r-\rho_1-\rho_0)^2}+{L} \\
& =\frac{M}{\rho_1^2}+{L}=4 e_\e(u)(x_1).
\end{align*}
Therefore, setting $V:=e_\e(u)(x_1)$, $\rho_2:=\rho_1\sqrt V$, $\widetilde\Omega:=\sqrt V(\Omega-x_1)$ and
\begin{equation*}
v(x)=\frac{1}{V}e_\e(u)(x_1+V^{-1/2}x)\quad\text{for }x\in B_{\rho_2}\cap\widetilde\Omega,\ \left(x_1+V^{-\frac12}x \in B_{\rho_1}(x_1) \right)
\end{equation*}
we find that it holds
\begin{gather}
1=v(0)\leq \sup_{B_{\rho_2}\cap\widetilde\Omega} v \leq 4,\label{estimv1}\\
\rho^{2-n}\int_{B_\rho\cap\widetilde\Omega} v \leq E\leq\eta_0\quad\text{for all }\rho\leq\rho_2.\label{estimv2}
\end{gather}
Note that we may assume
\begin{equation}\label{VLub}
V\geq {L},
\end{equation}
because otherwise $M=4\rho_1^2(V-{L})\leq 0$ and \eqref{smallestim1} is trivial.

\textbf{Step 2.} \textit{It holds $\rho_2 \leq 1$ (provided $\eta_0$ and $1/L$ are small enough).}

Assume that $\rho_2>1$. Let
\begin{equation*}
\widetilde u(x):=u(x_1+V^{-1/2}x)\quad\text{for }x\in B_{\rho_2}\cap\widetilde\Omega.
\end{equation*}
It holds
\begin{equation}\label{ftildeu}
\frac{1}{\e^2 V} f(\widetilde u)\leq v\leq 4\quad\text{in }B_{\rho_2}\cap\widetilde\Omega.
\end{equation}
We would like to deduce that $f(\widetilde u)$ is small, which will imply that $\dist(u,\mathcal N)$ is small thanks to the nondegeneracy assumption \eqref{nondegen}, and therefore allow us to use the Bochner-type inequality \eqref{bochner}.

Since $\Delta \widetilde u=\frac{1}{\e^2 V}\nabla f(\widetilde u)$, rescaled elliptic estimates \cite[Lemma~11]{nguyenzarnescu13} yield
\begin{equation}\label{smallestim2}
\sup_{B_{1/2}\cap\widetilde\Omega}\abs{\nabla\widetilde u}\leq C\left(\frac{1}{\e^2V}\norm{\nabla f(\widetilde u)}_{L^p(B_1\cap\widetilde\Omega)} +{V^{-1/2}}+{\norm{\nabla \widetilde u}_{L^2(B_1\cap\widetilde\Omega)}}\right).
\end{equation}

Recall that thanks to the nondegeneracy assumption \eqref{nondegen} we have
\begin{equation*}
\alpha_0 \abs{z^\perp}^2 \leq z^\perp\cdot A(z)z^\perp =f(z),
\end{equation*}
for all $z$ close enough to $\mathcal N$.
Using this and the expression \eqref{nablaf} for $\nabla f$, together with the uniform bound $\abs{\widetilde u}\leq R + \sup \abs{u_b}$ \eqref{unifbound} we find that
\begin{equation*}
\abs{\nabla f(\widetilde u)}\leq C\abs{\widetilde u^\perp} \leq C\sqrt{f(\widetilde u)}.
\end{equation*}
Hence it holds
\begin{equation*}
\abs{\nabla f(\widetilde u)}^p\leq C f(\widetilde u)^{p/2}\leq C f(\widetilde u),
\end{equation*}
for some constant $C>0$ depending on $\mathcal N$, $f$ and $p>2$. Therefore, using \eqref{ftildeu} and \eqref{estimv2} we find
\begin{equation*}
\norm{\nabla f(\widetilde u)}_{L^p(B_1\cap\widetilde\Omega)}\leq C (\e^2 V \eta_0)^{1/p}.
\end{equation*}
Plugging this and \eqref{VLub} into \eqref{smallestim2} we have
\begin{equation*}
\sup_{B_{1/2}\cap\widetilde\Omega}\abs{\nabla\widetilde u}\leq C\left(\frac{\eta_0^{1/p}}{(\e^2 V)^{1-1/p}}+\xi\right),\qquad
{\xi:=\frac{1}{L^{1/2}}+\eta_0^{1/2}}.
\end{equation*}
From the mean value theorem,
the smoothness of $f$ and the uniform bound \eqref{unifbound} we have
\begin{equation*}
f(\widetilde u)(x)\leq f(\widetilde u)(y) +C \sup_{B_{1/2}}\abs{\nabla \widetilde u}\quad \forall x,y\in B_{1/2}\cap\widetilde\Omega,
\end{equation*}
Integrating this inequality over $y\in B_{1/2}\cap\widetilde\Omega$ and using again \eqref{estimv2} one finds that for $x \in B_{1/2}\cap\widetilde\Omega$ it holds
\begin{equation*}
\frac{1}{\e^2 V} f(\widetilde u) \leq {\eta_0} +C\left(\frac{\eta_0^{1/p}}{(\e^2V)^{2-\frac{1}{p}}}+\frac{\xi}{\e^2 V}\right),
\end{equation*}
and therefore
\begin{align*}
\sup_{B_{1/2}\cap\widetilde\Omega} v &=\sup_{B_{1/2}\cap\widetilde\Omega} \frac 12 \abs{\nabla \widetilde u}^2 +\frac{1}{\e^2V}f(\widetilde u)  \\
&\leq  C\left[\left( \frac{\eta_0^{1/p}}{(\e^2V)^{1-\frac {1}{p}}} +\xi \right)^2+ \eta_0 +\frac{\eta_0^{1/p}}{(\e^2V)^{2-\frac1p}}+\frac{\xi}{\e^2V}\right].
\end{align*}
Recalling \eqref{estimv1} we deduce
\begin{equation*}
1\leq \sup_{B_{1/2}\cap\widetilde\Omega} v \leq
C\left(
\frac{\eta_0^{2/p}}{(\e^2V)^{2-\frac {2}{p}}}
+\xi^2 + \eta_0
+\frac{\eta_0^{1/p}}{(\e^2V)^{2-\frac1p}}+\frac{\xi}{\e^2V}
\right).
\end{equation*}
Since $\xi$ is arbitrarily small for small enough $\eta_0$ and $1/L$, we infer that given any $\delta_0>0$ it must hold $\e^2 V\leq \delta_0$, provided $\eta_0$ and $1/L$ are small enough. Recalling \eqref{ftildeu} we may therefore choose $\eta_0$ and $L$ in such a way that $\dist(\widetilde u,\mathcal N)<\delta$ and the Bochner-type inequality \eqref{bochner} holds for $\widetilde u$.
This implies
\begin{equation}\label{bochnerv}
-\Delta v \leq C v^2 \leq 4C\, v \quad\text{in }B_{1}\cap\widetilde\Omega.
\end{equation}
On the other hand, since on $\partial\Omega$ it holds $v=\abs{\nabla u}^2/2$ we deduce from Lemma~\ref{lem:bdryestim} the estimate
\begin{align*}
\sup_{B_{1/2}\cap\partial\widetilde\Omega} v & \leq C \left( {(1+V^{-1})}\norm{v}_{L^p(B_1)}^2 +{V^{-1}} +{\norm{v}_{L^1(B_1\cap\widetilde\Omega)}^{1/2} +\delta }\right)\\
&\leq C \left( \eta_0^{1/p} +\frac 1 {L}{ + \delta}\right).
\end{align*}
For the last inequality we used \eqref{estimv2} and \eqref{VLub}. We choose $\eta_0$ and $1/L$ small enough to ensure that $\delta$ is small and that $v\leq 1/4$ on $B_{1/2}\cap\partial\widetilde\Omega$.  Then the function
\begin{equation*}
\widetilde v:=\begin{cases}
\max(v-1/2,0)&\text{ in }B_{1/2}\cap\widetilde\Omega,\\
0 &\text{ in }B_{1/2}\setminus\widetilde\Omega,
\end{cases}
\end{equation*}
satisfies $-\Delta \widetilde v \leq C\, \widetilde v$ in $B_{1/2}$ and Harnack's inequality yields
\begin{equation*}
1/2 \leq \widetilde v(0)\leq c\int_{B_{1/2}}\widetilde v\leq c\eta_0,
\end{equation*}
which implies a contradiction provided $\eta_0$ is small enough. This proves $\rho_2\leq 1$.

\textbf{Step 3.} \textit{We conclude (provided $\e_0$ is small enough).}

Since $\rho_2\leq 1$ it holds $M=4\rho_1^2[V-L]\leq 4\rho_2^2\leq 4$, and in particular $\e^{-2}f(u)\leq e_\e(u)\leq C$ in $B_{r/2}$. In fact the same argument applies in any ball $B_r(\widetilde x_0)$ with $\abs{\widetilde x_0 -x_0}<r$, so that we have $\e^{-2}f(u)\leq C$ in $B_r$. Therefore the Bochner inequality \eqref{bochner} holds in $B_r$ provided $\e_0$ is small enough, and we deduce (using \eqref{estimv1} as above)
\begin{equation*}
-\Delta v \leq C v^2 \leq 4C\, v \quad\text{in }B_{\rho_2}\cap\widetilde\Omega.
\end{equation*}
As in Step~2, Lemma~\ref{lem:bdryestim} ensures
\begin{align*}
\sup_{B_{\rho_2/2}\cap\partial\widetilde\Omega} v \leq C\left( \eta_0^{1/p} +\frac {1} {L} {+ \e^2}\right) \leq 1/4,
\end{align*}
and we consider the function
\begin{equation*}
\widetilde v:=\begin{cases}
\max(v-1/2,0)&\text{ in }B_{\rho_2/2}\cap\widetilde\Omega,\\
0 &\text{ in }B_{\rho_2/2}\setminus\widetilde\Omega,
\end{cases}
\end{equation*}
which satisfies $-\Delta \widetilde v \leq C\, \widetilde v$ in $B_{\rho_2/2}$.
Letting now
\begin{equation*}
w(x):=\widetilde v(\rho_2 x)\quad \abs{x}<1,
\end{equation*}
it holds
$-\Delta w \leq 4C \rho^2_2\, w \leq 4C w$, and
Harnack's inequality yields
\begin{equation*}
1/2   \leq w(0)\leq c\int_{B_1}w = c\rho_2^{-n} \int_{B_{\rho_2}}\widetilde v
 \leq C \rho_2^{-2} E,
\end{equation*}
hence $8CE\geq 4\rho_2^2 =4\rho_1^2 V \geq M$, which concludes the proof.
\end{proof}

\section{Proof of Theorem~\ref{thm:main}}\label{s:proofmain}

Consider a compact $X\subset\overline\Omega\setminus \sing(u_\star)$. Let $\rho_*$, $\alpha_0$ and $K$ be as in Lemma~\ref{lem:quasimonot} and $\eta_0$ be as in the small energy estimate Proposition~\ref{prop:smallestim}.
Since $u_\star$ is smooth in a compact neighborhood of $X$, we may choose $\rho_0\in (0,\rho_*)$ with $4K\rho_0\leq\eta_0$ such that for all $x_0\in X$ we have
\begin{equation*}
\rho^{2-n}\int_{\Omega\cap B_\rho(x_0)}\abs{\nabla u_\star}^2 \leq \frac 14 \min(\eta_0,\alpha_0) \qquad \forall \rho\in [\rho_0,2\rho_0].
\end{equation*}
Then, for $\rho\in [\rho_0,2\rho_0]$ we find
\begin{align*}
\rho^{2-n}\int_{\Omega\cap B_\rho(x_0)}e_\e(u_\e) & \leq \frac 14 \min(\eta_0,\alpha_0) + \rho_0^{2-n}\left( \int_\Omega\abs{\nabla u_\e -\nabla u_*}^2 +  \frac{1}{\e^2}\int_\Omega f(u_\e) \right).
\end{align*}
Since $u_\e\to u_\star$ in $H^1$ and the minimality of $u_\e$ implies $\e^{-2}\int_\Omega f(u_\e)\to 0$ (by comparing with $u_\star$) we may choose $\epsilon_1\in (0,\e_0)$ (with $\e_0$ as in Proposition~\ref{prop:smallestim}) such that for all $\e\in (0,\e_1)$ it holds
\begin{equation*}
\rho^{2-n}\int_{\Omega\cap B_\rho(x_0)}e_\e(u_\e) \leq \frac 12 \min(\eta_0,\alpha_0) \qquad \forall \rho\in [\rho_0,2\rho_0].
\end{equation*}
By Lemma~\ref{lem:quasimonot} we deduce
\begin{equation*}
\rho^{2-n}\int_{\Omega\cap B_\rho(x_0)}e_\e(u_\e) \leq \eta_0,
\end{equation*}
for all $x_0\in X$ and all $\rho\in (0,\rho_0)$. This allows us to apply Proposition~\ref{prop:smallestim} to conclude that
\begin{equation*}
\sup_X \abs{\nabla u_\e} \leq C(X),
\end{equation*}
so that by Arzela-Ascoli's theorem, $u_\e$ converges in fact uniformly in $X$.\qed

\

\textbf{Acknowledgements:} R.R has been supported by the Millennium Nucleus Center for Analysis of PDE NC130017 of the Chilean Ministry of Economy.

\bibliographystyle{plain}
\bibliography{convQ}

\end{document}